\newtheorem{theorem}{Theorem}
\newtheorem{prop}[theorem]{Proposition}
\newcommand{\del}[1]{\frac{\partial}{\partial #1}}
\newcommand{\indel}[1]{\partial/\partial #1}
\begin{document}

\title[Locally homogeneous affine connections on surfaces]{A classification of locally homogeneous affine connections on compact surfaces}

\author{Adolfo Guillot}
\author{Antonia S\'anchez Godinez}
\address{Unidad Cuernavaca, Instituto de Matem\'{a}ticas, Universidad Nacional Aut\'{o}noma de M\'{e}xico, Av. Universidad s/n, Lomas de Chamilpa, C.P.~62210, Cuernavaca, Morelos, M\'exico.}
\email{adolfo@matcuer.unam.mx, antonia@matcuer.unam.mx}
\thanks{Partially supported by PAPIT-UNAM IN108214 (Mexico)}

\date{\today}

\subjclass[2010]{Primary 53C30, 53A15, 51H25}

\begin{abstract}We classify the affine connections on compact orientable surfaces for which the pseudogroup of local isometries acts transitively. We prove that such a connection is either torsion-free and flat, the Levi-Civita connection of a Riemannian metric of constant curvature or the quotient of a translation-invariant connection in the plane. 
\end{abstract}

\maketitle

\section{Introduction}

The most symmetric Riemannian metrics on compact orientable surfaces are those for which the isometry pseudogroup acts transitively. In such a case, the Gaussian curvature is necessarily constant and, thus, the surface is locally modeled on a simply connected surface endowed with a complete metric of constant curvature (the sphere, the Euclidean plane or the hyperbolic one, up to similitudes). Moreover, the surface is a global quotient of this space. The aim of this article is to classify, in the same spirit, the most symmetric   affine connections on surfaces:

\begin{theorem}\label{thmmain} Let~$S$ be a compact orientable surface endowed with a~$C^\infty$   affine connection~$\nabla$ such that the pseudogroup of local isometries of~$\nabla$ acts transitively upon~$S$. Either
\begin{itemize}
 \item $\nabla$ is torsion-free and flat, 
 \item $\nabla$ is the Levi-Civita connection of a Riemannian metric on~$S$, or
 \item $S$ is the quotient of~$\mathbf{R}^2$ under the action of a lattice and $\nabla$ is induced by a translation-invariant connection on~$\mathbf{R}^2$.
\end{itemize} \end{theorem}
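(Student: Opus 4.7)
My plan is to split the proof according to whether the torsion tensor $T$ of $\nabla$ vanishes, in each case reducing to a known local classification of locally homogeneous affine connections on surfaces and then using the topology of compact orientable surfaces to globalise. Both $R$ and $T$ are preserved by every local isometry, so transitivity forces the pointwise isomorphism class of each to be constant on $S$. In dimension two, $T$ is a section of $TS\otimes\Lambda^2T^*S$; at a point $p$ where it is non-zero, its image is a line in $T_pS$. Hence either $T\equiv 0$, or else $T$ defines a nowhere-zero line subbundle $L\subset TS$ preserved by the pseudogroup, which by Poincar\'e--Hopf forces $\chi(S)=0$ and so $S$ is a $2$-torus.

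If $T\equiv 0$, I would analyse the Ricci tensor, an invariant $(0,2)$-tensor. If $\mathrm{Ric}\equiv 0$, then in dimension two the whole curvature tensor vanishes and $\nabla$ is flat, yielding case~(1). Otherwise, the symmetric part $\mathrm{Ric}^+$ has constant rank by homogeneity; if it is non-degenerate it is a pseudo-Riemannian metric $g$ invariant under the pseudogroup. To conclude $\nabla=\nabla^g$ (placing us in case~(2), with $g$ of constant curvature), I would consider the symmetric $(1,2)$-tensor $A=\nabla-\nabla^g$ and verify, against Opozda's local classification of locally homogeneous torsion-free connections on surfaces, that no non-trivial invariant deformation of a constant-curvature Levi-Civita connection is again locally homogeneous. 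The degenerate subcases of $\mathrm{Ric}^+$ can be read off the same classification and lead back to case~(1).

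If $T\not\equiv 0$, the isotropy subalgebra of the Lie algebra $\mathfrak{g}$ of local Killing fields must, at each point, stabilise $L_p$ while preserving $T_p$ and $R_p$; a short linear-algebra check shows this isotropy is trivial. Hence $\dim\mathfrak{g}=2$ and $\mathfrak{g}$ is locally simply transitive. Passing to the universal cover and integrating, one obtains a simply-transitive action of a connected $2$-dimensional Lie group $G$ on $\mathbf{R}^2$ preserving the pullback of $\nabla$; using the fact that $G$ must admit a lattice whose quotient is a torus, $G$ is forced to be abelian, so $\nabla$ is translation-invariant on $\mathbf{R}^2$ and descends as in case~(3).

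The main obstacle is the torsion-free step: ruling out non-Levi-Civita torsion-free locally homogeneous connections whose symmetric Ricci tensor is non-degenerate seems to require either the full local classification of Opozda or a delicate invariant-theoretic argument identifying the difference tensor $A=\nabla-\nabla^g$ inside the list of parallel $(1,2)$-tensors compatible with a constant-curvature metric. Most of the work is likely to be there, since the torsion case reduces fairly mechanically to the $2$-torus once the line field $L$ has been isolated, whereas the torsion-free case must rule out an a~priori infinite-dimensional family of candidate deformations.
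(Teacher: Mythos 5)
There are two genuine gaps, one in each branch of your plan. In the torsion-free branch, your dichotomy can only ever output conclusions (1) or (2), but conclusion (3) also occurs for torsion-free connections, so the branch is structurally incomplete. Concretely, the translation-invariant connection on $\mathbf{R}^2$ with constant Christoffel symbols $\nabla_{\partial_x}\partial_x=\partial_y$, $\nabla_{\partial_y}\partial_y=\partial_x$, $\nabla_{\partial_x}\partial_y=\nabla_{\partial_y}\partial_x=0$ is torsion-free, locally homogeneous, descends to any torus $\mathbf{R}^2/\Lambda$, and is not flat (one computes $R(\partial_x,\partial_y)\partial_x=-\partial_x$); its Ricci tensor is symmetric, non-degenerate of signature $(1,1)$, so it cannot be the Levi-Civita connection of a Riemannian metric (whose Ricci is $Kg$, definite or zero). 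Hence the step ``$\mathrm{Ric}^+$ non-degenerate $\Rightarrow\nabla=\nabla^g$ with $g$ Riemannian of constant curvature'' is false, as is ``$\mathrm{Ric}^+$ degenerate $\Rightarrow$ flat.'' The paper avoids this by classifying the possible Killing algebras first (Theorem~\ref{liecorr}); the torsion-free, non-metric, non-flat models all fall into the four-dimensional family~(\ref{conn4d}), and it is the global $(G,X)$-structure analysis that forces their compact quotients to be translation quotients.

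In the torsion branch, the ``short linear-algebra check'' does not give trivial isotropy. Writing $T_p=v\otimes\omega$ with $v\neq 0$ and $\omega$ an area form, an endomorphism $A$ preserves $T_p$ exactly when $Av=\det(A)\,v$, which is a codimension-one (not codimension-three) condition on $\mathrm{GL}(2,\mathbf{R})$; preserving $R_p$, which for the relevant models is again of rank one with the same image line, cuts nothing further. Indeed the paper's family~(\ref{conn4d}) with $\upsilon\neq 0$ has non-vanishing torsion and a \emph{four}-dimensional Killing algebra, hence two-dimensional isotropy at every point, directly contradicting your claim that $\dim\mathfrak{g}=2$. Your subsequent step also has an independent problem: even where $\mathfrak{g}$ is two-dimensional and simply transitive, arguing that ``$G$ must admit a lattice whose quotient is a torus'' presupposes that the associated $(G,X)$-structure is complete, i.e.\ that the developing map is a diffeomorphism and $\pi_1(S)$ embeds as a lattice; this is not automatic for affine structures on compact surfaces. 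The paper instead excludes the $\mathrm{Aff}^+(\mathbf{R})$ case by producing a holonomy-invariant submersion to $\mathbf{R}$, which cannot descend to a compact surface, and in the abelian case it still needs a separate argument to identify $\overline{S}$ with $S$ and the developing map with a global diffeomorphism.
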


Opozda proved that a compact orientable surface admitting a locally homogeneous connection which is not the Levi-Civita connection of a Riemannian metric is a torus~\cite{opozda}. Our result can be seen as a refinement of this statement: such a connection is either torsion-free and flat (and thus fits within the description given by Nagano and Yagi~\cite{naganoyagi}) or the global quotient (by translations) of a translation-invariant connection in the plane. 

Our discussion splits naturally into a local and a global part. The local one is based upon Lie's classification of transitive transformation groups in dimension two. For our local analysis, we refine, in Theorem~\ref{liecorr}, some aspects of the local classification of locally homogeneous connections and their transformation groups, studied in~\cite{ATO} and \cite{kov}. The global part uses the~$(G,X)$\nobreakdash-structure  (in the sense of Thurston, \cite[\S 3.4]{thurston}) associated to some locally homogeneous rigid geometric structure (a connection in the present case), as detailed in~\cite{dum-gui}.

We heartily thank Sorin Dumitrescu for many interesting conversations and for his comments on this text.

\section{Connections and their local symmetries}

For a manifold~$M$, we denote by~$\mathfrak{X}(M)$ the sheaf of its~$C^\infty$ vector fields. An \emph{(affine) connection} on~$M$ is a map~$\nabla:\mathfrak{X}(M)\times \mathfrak{X}(M)\to \mathfrak{X}(M)$, $\nabla_X Y=\nabla(X,Y)$,  such that, for functions~$f$ and~$g$, $\nabla_{fX}gY=fg\nabla_XY+f(X\cdot g)Y$. To a connection~$\nabla$ we may associate its \emph{torsion} tensor~$T(X,Y)=\nabla_X Y-\nabla_YX-[X,Y]$ and its~\emph{curvature} tensor~$R(X,Y) =\nabla_X\nabla_Y-\nabla_Y\nabla_X-\nabla_{[X,Y]}$. A connection with vanishing torsion is called~\emph{torsion-free} or \emph{symmetric};  one with vanishing curvature, \emph{flat}. A torsion-free and flat connection is, in suitable coordinates, the standard affine connection in~$\mathbf{R}^n$, the unique connection such that~$\nabla_XY=0$ for every pair~$(X,Y)$ of constant vector fields (see~\cite[\S 1.7]{wolf} or \cite[Ch.~I, \S 10]{eisenhart}).

For~$U\subset M$, a mapping~$f:U\to M$ is a \emph{local isometry} if it preserves~$\nabla$. A vector field~$X$ in~$U$ is said to be a \emph{Killing field} of~$\nabla$ is its local flow is composed of local isometries. Local isometries form naturally a Lie pseudogroup whose Lie algebra is the one formed by Killing vector fields. In particular, if the isometry pseudogroup of~$\nabla$ acts transitively in a neighborhood of~$p\in M$, there exist two Killing vector fields of~$\nabla$ in a neighborhood of~$p$ which are linearly independent. A vector field~$X$ is a Killing field of the connection~$\nabla$ if for every pair of (local) vector fields~$Y$ and~$Z$ we have
\begin{equation}\label{equationkilling}[X,\nabla_Y Z]-\nabla_Y[X,Z]=\nabla_{[X,Y]}Z.\end{equation}
(See~\cite[Ch.~VI, Prop.~2.2]{kobayashi-nomizu}).

In coordinates, in the neighborhood of a point in~$\mathbf{R}^2$, the most general connection has the form
\begin{multline}\label{connection}\nabla_{\del{x}}\del{x}=A\del{x}+B\del{y},\; \nabla_{\del{x}}\del{y}=\left(C+{\textstyle\frac{1}{2}}U\right)\del{x}+\left(D+{\textstyle\frac{1}{2}}V\right)\del{y}, \\ \nabla_{\del{y}}\del{x}=\left(C-{\textstyle\frac{1}{2}}U\right)\del{x}+\left(D-{\textstyle\frac{1}{2}}V\right)\del{y},\;\nabla_{\del{y}}\del{y}=E\del{x}+F\del{y},\end{multline}
where~$A$, $B$, $C$, etc. are functions of~$x$ and~$y$. The torsion-free case corresponds to~$U \equiv 0$ and~$V\equiv 0$. The vector field~$X=a(x,y)\indel{x}+b(x,y)\indel{y}$ will be a vector field of the above connection if and only if its coefficients satisfy the system of  partial differential equations
\begin{eqnarray}
\label{eq1} 0 & = & a_{xx}+Aa_x-Ba_y+2Cb_x+A_x a+A_y b,\\
   0 & = & b_{xx}+2Ba_x+(2D-A)b_x-Bb_y+B_x a+B_y b,\\
    0 & = & a_{xy}+(A-D)a_y+Eb_x+Cb_y+C_xa+C_y b,\\
   0 & = & b_{xy}+Da_x+Ba_y+(F-C)b_x+D_x a+D_y b,\\
   0 & = & a_{yy}-Ea_x+(2C-F)a_y+2Eb_y+E_x a+E_y b,\\
\label{eq6} 0 & = & b_{yy}+2Da_y-Eb_x+Fb_y+F_x a+F_y b;\end{eqnarray}
\begin{eqnarray} 0 & = & aU_x+bU_y-a_yV+b_yU,\\
\label{eqfin} 0 & = &  aV_x+bV_y+a_x V-b_xU.
\end{eqnarray}
These equations are obtained through the expressions of~(\ref{equationkilling}) when~$(Y,Z)$ takes values on all ordered couples of  coordinate vector fields (another formulation is given in~\cite{ATO}; in the torsion-free case the equations reduce to those considered in~\cite{kov}). The last two equations express the fact that~$X$ preserves the torsion tensor of~$\nabla$.\\

Our starting point is the following result, which summarizes  Lie's classification of two-dimen\-sional transformation groups in the presence of an invariant connection:

\begin{prop}\label{baselie} Let~$\nabla$ be a~$C^\infty$  affine connection in a neighborhood of the origin of~$\mathbf{R}^2$. If there are two Killing vector fields that are linearly independent at~$0$ and the connection is not torsion-free and flat, then the Lie algebra of Killing vector fields satisfies at least one of the following:
\begin{enumerate}
\item \label{2d} is two-dimensional;
\item \label{clas-so3} is isomorphic to~$\mathfrak{o}(3,\mathbf{R})$
\item  \label{clas-sl2} is isomorphic to~$\mathfrak{sl}(2,\mathbf{R})$
\item \label{mask2} in suitable coordinates, contains the vector fields~$\indel{y}$ and those of the form $h(y)\indel{x}$ for the solutions~$h(y)$ of a second-order linear differential equation with constant coefficients.
\item \label{mask3} in suitable coordinates, contains the vector fields~$\indel{y}$ and $x\indel{x}$. 
\item \label{rabtor}  in suitable coordinates, contains the vector fields~$\indel{y}$ and $\indel{x}$ and $(sx+y)\indel{x}+(sy-x)\indel{y}$ for some~$s\in\mathbf{R}$. 
\end{enumerate}
 \end{prop}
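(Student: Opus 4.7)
My plan is to invoke Lie's classification of finite-dimensional transitive local Lie algebras of real vector fields on $\mathbf{R}^2$ (from Lie's original memoirs, revisited e.g.\ by Gonz\'alez-L\'opez--Kamran--Olver) and go through its items one by one. Because two Killing fields of $\nabla$ are linearly independent at the origin, the Lie algebra $\mathfrak{g}$ of Killing fields acts transitively in a neighborhood of $0$, so it sits as a transitive subalgebra of one of the algebras on Lie's list. The two-dimensional case gives case~(\ref{2d}) of the proposition, so from here on I assume $\dim\mathfrak{g}\geq 3$.

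For $\dim\mathfrak{g}=3$, Lie's list splits into the two primitive algebras $\mathfrak{o}(3,\mathbf{R})$ and $\mathfrak{sl}(2,\mathbf{R})$, which give cases~(\ref{clas-so3}) and~(\ref{clas-sl2}), and a short catalogue of imprimitive solvable algebras, each presented in an explicit coordinate normal form. I would bring $\mathfrak{g}$ to such a normal form and match the resulting algebra with one of the cases~(\ref{mask2}),~(\ref{mask3}) or~(\ref{rabtor}). Case~(\ref{mask2}) arises for the solvable algebras that preserve a foliation $\{y=\mathrm{const}\}$ and contain $\indel{y}$ together with two fields of the form $h(y)\indel{x}$; closure under the bracket with $\indel{y}$ forces the admissible $h$'s to span a two-dimensional subspace of $C^\infty$ stable under differentiation, equivalently to satisfy a second-order linear ODE with constant coefficients. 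Cases~(\ref{mask3}) and~(\ref{rabtor}) correspond to the remaining imprimitive solvable items on Lie's list (a translation together with a dilation, and a pair of commuting translations normalised by a one-parameter group with linear part $sI + J$). Any three-dimensional transitive algebra not falling in~(\ref{clas-so3})--(\ref{rabtor}) must be eliminated by showing directly, through~(\ref{eq1})--(\ref{eqfin}) in the corresponding normal form, that it can only act by isometries of the standard flat torsion-free connection.

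For $\dim\mathfrak{g}\geq 4$, every transitive algebra on Lie's list (the affine algebras of dimension $4$, $5$ and~$6$ and the eight-dimensional projective $\mathfrak{sl}(3,\mathbf{R})$) contains a three-dimensional subalgebra from the list above. Working in the normal form provided by this subalgebra, substituting the extra generator(s) into the Killing system~(\ref{eq1})--(\ref{eqfin}) forces all of $A,B,\ldots,V$ to vanish, so $\nabla$ is torsion-free and flat, contradicting the hypothesis. This rules out all these large algebras.

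I expect the main obstacle to be the bookkeeping in the three-dimensional solvable case: Lie's list contains several imprimitive algebras in coordinate normal forms that must be compared with~(\ref{mask2})--(\ref{rabtor}), and for each remaining possibility one must write down and solve the relevant Killing equations to confirm that no non-flat or non-torsion-free $\nabla$ is admitted. Once the dimension-three analysis is fixed, the higher-dimensional case reduces to a short verification that the extra generators kill all the structure coefficients of~$\nabla$.
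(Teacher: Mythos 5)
Your overall strategy---reduce to Lie's classification of finite-dimensional transitive Lie algebras of vector fields on~$\mathbf{R}^2$ and sort the entries among the six items---is the same as the paper's, and your treatment of dimensions two and three is essentially sound (note only that item~(\ref{clas-sl2}) must also absorb the two \emph{imprimitive} actions of~$\mathfrak{sl}(2,\mathbf{R})$, and that the algebras of item~(\ref{rabtor}) are primitive rather than imprimitive; since items~(\ref{clas-so3})--(\ref{rabtor}) are containment or abstract-isomorphism statements, this costs nothing). The genuine gap is your step for $\dim\mathfrak{g}\geq 4$, and it is twofold. First, the enumeration is wrong: besides the affine algebras and~$\mathfrak{sl}(3,\mathbf{R})$, Lie's list contains imprimitive transitive families (Olver's entries 1.5--1.11, whose dimensions grow with the integer parameter~$k$, together with~1.3) that reach dimension~$\geq 4$; for instance $\langle \indel{y},\, x\indel{x},\, h_1(y)\indel{x},\, h_2(y)\indel{x}\rangle$ with $h_1,h_2$ spanning the solutions of a second-order constant-coefficient equation. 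Second, and more seriously, your claimed conclusion---that substituting the extra generators into (\ref{eq1})--(\ref{eqfin}) forces all of $A,B,\dots,V$ to vanish, so that no algebra of dimension $\geq 4$ survives---is false: the four-dimensional algebra just displayed preserves the connections~(\ref{conn4d}), whose torsion and curvature need not both vanish. This is precisely item~(\ref{itemcorr}) of Theorem~\ref{liecorr} and the subject of the entire second half of the paper; if your step were correct, that case would be empty. Such entries must be \emph{assigned to} item~(\ref{mask2}) (or~(\ref{mask3})), not eliminated.

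What you are missing in order to dispose of the genuinely inadmissible high-dimensional entries (1.4, 1.5--1.9 for $k\geq 3$, 1.10--1.11 for all~$k$, and the largest primitive entries 6.7 and 6.8) without an unbounded computation is the lemma the paper leans on: a Killing field of an affine connection that vanishes at a point is \emph{linear} in exponential coordinates, so the Killing algebra contains no nonzero field with vanishing $1$-jet (this also justifies finite-dimensionality, hence the applicability of Lie's list). Each excluded entry contains a generator such as $x^2\indel{x}+\cdots$ or $x^j\indel{y}$ with $j\geq 2$, vanishing to second order at a point, so it preserves no connection whatsoever; this handles all values of~$k$ at once. With that lemma in place, no Killing-equation computation is needed in this proposition: one merely checks that every surviving entry of Lie's list contains one of the model subalgebras of items~(\ref{2d})--(\ref{rabtor}). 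Determining which of those algebras actually preserve a connection that is not torsion-free and flat, and what the maximal Killing algebra then is, belongs to Theorem~\ref{liecorr}, not here.
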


This statement comes from investigating, for every one of the transitive Lie transformation groups in~$\mathbf{R}^2$ (as classified by Lie in the \emph{Gruppenregister} and presented by Olver in~\cite{olver}), the affine connections that are preserved. Tables~1 and~6 in pages 472 and 475 of~\cite{olver} present these groups, labeled 1.1 through~1.11 (for the imprimitive ones) and~6.1 through~6.8 (for the primitive ones). The   groups 1.5--1.11 contain a parameter~$k\in \mathbf{Z}$ ($k\geq 1$); the groups~6.1 and~1.7 contain a parameter~$\alpha\in\mathbf{R}$. 

Most of these transformation groups do not preserve a connection. Indeed, if~$\nabla$ is a   connection in the neighborhood of the origin of~$\mathbf{R}^n$ and if~$X$ is a Killing vector field of~$\nabla$ vanishing at~$0$, $X$ is linear in exponential coordinates~\cite[Thm.~1.6.20]{wolf}. There is thus no nonzero Killing vector field with vanishing linear part. This prevents, a priori, cases 1.4, 1.5--1.9 for $k\geq 3$, 1.10--1.11 (for all values of~$k$), 6.7 and 6.8 from preserving a connection.

Many groups fall within the scope of item~(\ref{mask2}). This is the
case for the groups 1.5 and 1.6 (both in the case $k=2$). A particular
case of item~(\ref{mask2}) corresponds to~$h''=0$. It covers the
groups 1.7, 1.8, 1.9 (these three for all values of~$k$), 6.5 and 6.6. The group 1.5 ($k=1$) belongs to item~(\ref{2d}). The group 1.6 ($k=1$) corresponds to~(\ref{mask3}). The groups~1.1, 1.2, and 6.2 are covered by~(\ref{clas-sl2}). The group~6.3 belongs to~(\ref{clas-so3}). The group~1.3 is defined in~$\{(x,u)|u>0\}$ and contains the vector fields~$x\indel{x}, u\indel{u}$: in the coordinates~$(x,y)=(x,\log u)$, it belongs to case~(\ref{mask3}). The groups 6.1 and 6.4 belong to (\ref{rabtor}).

All these facts add up to Proposition~\ref{baselie}.\\

A complete picture should explicit the cases where the Lie transformation group preserving the connection is the maximal one. To this respect, we may refine the above statement in the following way (compare with~\cite{kov} and~\cite{ATO}):

\begin{theorem}\label{liecorr} Let~$\nabla$ be a~$C^\infty$   affine connection in a neighborhood of the origin of~$\mathbf{R}^2$. If there are two Killing vector fields that are linearly independent at~$0$ and the connection is not torsion-free and flat, then the Lie algebra of Killing vector fields has dimension at most four and is either:
\begin{enumerate}
\item two-dimensional;
\item  isomorphic to~$\mathfrak{o}(3,\mathbf{R})$;
\item   isomorphic to~$\mathfrak{sl}(2,\mathbf{R})$; or
\item\label{itemcorr} in suitable coordinates, generated by the vector fields~$x\indel{x}$, $\indel{y}$ and those of the form $h(y)\indel{x}$ for the solutions~$h(y)$ of a second-order linear differential equation  with constant coefficients.
\end{enumerate}
 \end{theorem}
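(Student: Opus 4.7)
The strategy is to analyze the six cases of Proposition~\ref{baselie} one by one. Items~(\ref{2d}), (\ref{clas-so3}) and~(\ref{clas-sl2}) already coincide with conclusions (1)--(3) of the theorem; moreover, since $\mathfrak{o}(3,\mathbf{R})$ and $\mathfrak{sl}(2,\mathbf{R})$ are simple while the $4$-dimensional algebra appearing in~(\ref{itemcorr}) is solvable, any Killing algebra containing one of them must equal it. So the real work is to show that cases~(\ref{mask2}), (\ref{mask3}) and~(\ref{rabtor}) of the proposition either fall under conclusion~(\ref{itemcorr}) or else force $\nabla$ to be torsion-free and flat (and hence be excluded by the hypothesis), and to verify the dimension bound $\dim\leq 4$.

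In each of the three remaining cases the plan is to substitute the listed Killing vector fields into the system~(\ref{eq1})--(\ref{eqfin}) to restrict the coefficients $A,B,\ldots,V$ of the generic connection~(\ref{connection}), and then to re-solve the same system for a general field $X=a\,\indel{x}+b\,\indel{y}$ with the now partially-determined connection in order to read off the full Killing algebra. For case~(\ref{mask3}), the field $\indel{y}$ forces all coefficients to depend only on~$x$; imposing that $x\,\indel{x}$ be Killing further forces $A=B=D=V=0$, $C,F,U$ to be constants, and $E=\alpha\,x$ for a constant~$\alpha$. A direct integration of the Killing system for this connection produces exactly the $4$-dimensional algebra spanned by $\indel{y}$, $x\,\indel{x}$ and the two independent solutions $h_{1}(y)\,\indel{x},h_{2}(y)\,\indel{x}$ of $h''+(2C-F)h'+\alpha h=0$; the single exception is the subcase $U=0$ and $\alpha=C(C-F)$, in which a short curvature computation shows $\nabla$ to be torsion-free and flat.

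Case~(\ref{mask2}) is handled along the same lines. Imposing that $\indel{y}$ and a $2$-parameter family $h(y)\,\indel{x}$ be Killing fixes the connection in the same form as above, except that a free constant $E_{0}$ may now appear in $E=q\,x+E_{0}$, where $q$ and $2C-F$ are respectively the constant term and the first-order coefficient of the prescribed ODE. A translation $x\mapsto x+E_{0}/q$ when $q\neq 0$, or an appropriate shearing involving~$y$ when $q=0$, absorbs~$E_{0}$ and returns us to the situation of case~(\ref{mask3}); the Killing algebra is then again of the form~(\ref{itemcorr}). The main obstacle I anticipate lies here: carefully tracking the degenerate subcases (distinct versus repeated roots of the characteristic polynomial, $q=0$, $F=0$) and verifying that in each of them a diffeomorphism in the local pseudogroup actually promotes $x\,\indel{x}$ to a Killing field.

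For case~(\ref{rabtor}), the translations $\indel{x}$ and $\indel{y}$ make $\nabla$ translation-invariant, so all of $A,\ldots,V$ are constants. Substituting $Z=(sx+y)\,\indel{x}+(sy-x)\,\indel{y}$ into the two torsion equations yields $sU=V$ and $sV=-U$, whence $(s^{2}+1)U=0$ and therefore $U=V=0$. The remaining six equations form a linear system in $(A,B,C,D,E,F)$ which expresses the vanishing of the action of the matrix $sI+J$ (with $J$ the standard $90^{\circ}$ rotation generator) on the Christoffel symbols, viewed as a $(1,2)$-tensor; the eigenvalues of this action are $s+i\kappa$ for $\kappa\in\{\pm 1,\pm 3\}$, none of which vanish for real~$s$. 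Hence $A=\cdots=F=0$, $\nabla$ is torsion-free and flat, and case~(\ref{rabtor}) is excluded by hypothesis. Collecting these reductions proves both the claimed list of cases and the bound $\dim\leq 4$.
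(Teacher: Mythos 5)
Your proposal is correct and follows essentially the same route as the paper: impose the prescribed Killing fields through the system~(\ref{eq1})--(\ref{eqfin}) to pin down the connection, normalize away the inessential constant in~$E$ by a translation/shear, re-solve the Killing system for the normalized connection to get exactly the four-dimensional algebra of item~(\ref{itemcorr}) (with the sole exception $U=0$, $\beta=C(C-F)$, which is torsion-free and flat), and show that case~(\ref{rabtor}) forces the flat connection. The differences are minor: you reduce case~(\ref{mask2}) to case~(\ref{mask3}), whereas the paper derives the family~(\ref{casogen}) in case~(\ref{mask2}) and reduces~(\ref{mask3}) to it — purely cosmetic; and in case~(\ref{rabtor}) your argument is nicer than the paper's, since viewing the six (or eight) equations as $\mathcal{L}_Z\Gamma=0$ for the difference tensor with the flat connection and reading off the eigenvalues $s+i\kappa$, $\kappa\in\{\pm1,\pm3\}$, of the tensorial action conceptually explains the paper's determinant $(s^2+9)(s^2+1)^3$ rather than just computing it. One caveat: your opening remark that simplicity of $\mathfrak{o}(3,\mathbf{R})$ and $\mathfrak{sl}(2,\mathbf{R})$ versus solvability of the four-dimensional algebra forces any Killing algebra containing them to equal them is not a valid inference as stated (a larger algebra containing a simple one need not be either), but it is also unnecessary: items~(\ref{clas-so3}) and~(\ref{clas-sl2}) of Proposition~\ref{baselie} already assert isomorphism, not mere containment, so nothing needs to be proved there.
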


\begin{proof} We will prove that in cases~(\ref{mask2}) and (\ref{mask3}) of Proposition~\ref{baselie}, we are actually in case~(\ref{itemcorr}) of Theorem~\ref{liecorr} and that in case (\ref{rabtor}) we have a torsion-free and flat connection.

We begin with case~(\ref{mask2}) of Proposition~\ref{baselie}. Let~$\alpha,\beta\in\mathbf{R}$ and let~$\nabla$ be a connection in a neighborhood of the origin of~$\mathbf{R}^2$. Suppose that the Killing Lie algebra of~$\nabla$ contains the three-dimensional Lie algebra of vector fields~$\mathfrak{K}_{\alpha,\beta}$ generated by~$\indel{y}$ and by the vector fields of the form~$h(y)\indel{x}$ such that
\begin{equation}\label{dekillingk}h''+\alpha h'+\beta h=0.\end{equation}
Since~$\indel{y}$ is a Killing vector field, $\nabla$ is of the form~(\ref{connection}) and all the coefficients are functions of~$x$ (independent of~$y$). If~$h(y)\indel{x}$ is a Killing vector field of~$\nabla$, it should satisfy the equations~(\ref{eq1})--(\ref{eqfin}), which read
\begin{multline}\label{syschris} -B h' +A' h=0,\; B' h =0,\; (A-D) h' +C' h =0,\;  B h' +D' h =0, \\ h'' +(2C-F) h' +E' h =0,\;2D h' +F' h =0,\; Vh'-U'h=0,\; V'h=0.
 \end{multline}
All these are linear differential equations in~$h(y)$ whose coefficients are functions of~$x$. Since, by hypothesis, every solution of~(\ref{dekillingk}) should be a solution to all the equations in the system~(\ref{syschris}), all the  first-order equations of the system should be identically zero and the second-order one should be~(\ref{dekillingk}). We conclude that
$$A=0,\; B=0,\; D=0,\; C=\gamma,\; F=2\gamma-\alpha,\; E=\beta x+\delta, U=2\upsilon,\; V=0,$$
for some~$\delta,\gamma,\upsilon\in\mathbf{R}$. The connection reduces to
\begin{multline}\label{casogen}\nabla_{\del{x}}\del{x}=0,\; \nabla_{\del{x}}\del{y}=(\gamma+\upsilon)\del{x},\; \nabla_{\del{y}}\del{x}=(\gamma-\upsilon)\del{x},\\ \nabla_{\del{y}}\del{y}=(\beta x+\delta)\del{x}+(2\gamma-\alpha)\del{y}.\end{multline}
The parameter~$\delta$ is not essential. If~$q$ is a function such that $q''+\alpha q'+\beta q+\delta=0$, the change of coordinates~$(x,y)\mapsto(x+q(y),y)$ preserves the Lie algebra~$\mathfrak{K}_{\alpha,\beta}$ and maps the connection~(\ref{casogen}) to one with the same values of~$\alpha$, $\beta$, $\gamma$ and~$\upsilon$ but with~$\delta=0$. Accordingly, we will henceforth assume that~$\delta=0$. For the torsion and curvature of~$\nabla$ we have
\begin{equation}\label{tor}\nabla_{\del{x}}\del{y}-\nabla_{\del{y}}\del{x}=2\upsilon,\end{equation}
\begin{equation}\label{curv4d}\left(\nabla_{\del{x}}\nabla_{\del{y}}-\nabla_{\del{y}}\nabla_{\del{x}}\right)\del{x}=0, \; \left(\nabla_{\del{x}}\nabla_{\del{y}}-\nabla_{\del{y}}\nabla_{\del{x}}\right)\del{y}=[\beta+(\gamma+\upsilon)(\gamma+\upsilon-\alpha)]\del{x}. \end{equation}
Let us now calculate the full Lie algebra of Killing vector fields of~$\nabla$ in the case where it is not torsion-free and flat. The system~(\ref{eq1})--(\ref{eqfin}) reduces to \begin{eqnarray}
\label{primera} 0 & = & a_{xx}+2\gamma b_x ,\\
\label{segunda}  0 & = &   b_{xx},\\
\label{tercera}  0 & = &  a_{xy}+\beta xb_x+\gamma b_y   , \\
\label{cuarta}   0 & = &  b_{xy}+(\gamma-\alpha)b_x  , \\
\label{quinta}  0 & = &  a_{yy}-\beta xa_x+\alpha a_y+2\beta x b_y+\beta a ,\\
\label{sexta}  0 & = &  b_{yy}-\beta xb_x+(2\gamma-\alpha)b_y,\end{eqnarray}
plus
\begin{equation}\label{sieteyocho} \upsilon b_y =0,\; \upsilon b_x=0.\end{equation}
Thus, $a(x,y)\indel{x}+b(x,y)\indel{y}$ will be a Killing vector field of the above connection if and only if it satisfies this system. Our discussion splits naturally in two cases.
\begin{description}
\item[Torsion-free] If~$\upsilon=0$, (\ref{sieteyocho}) is automatically satisfied. From~(\ref{segunda}) we obtain that~$b=f(y)x+g(y)$. Substituting this into~(\ref{cuarta}) and~(\ref{sexta}) we obtain, respectively, 
\begin{equation}\label{temp1}f'=(\alpha-\gamma)f,\end{equation}
\begin{equation}\label{temp2}x[f''+(2\gamma-\alpha)f'-\beta f]+[g''+(2\gamma-\alpha)g']=0.\end{equation}
The second equation splits as two separate ones, since the terms in brackets are functions of~$y$. Substituting~(\ref{temp1}) in~(\ref{temp2}), we have~$(\beta+\gamma^2-\gamma\alpha)f=0$. Since we are assuming that the connection is not flat, $\beta+\gamma^2-\gamma\alpha\neq 0$ and thus $f=0$. From equation~(\ref{primera}) we have that~$a=k(y)x+h(y)$. Equation~(\ref{tercera}) becomes~$k'+\gamma g'=0$. After solving~$k'$ from the latter equation and~$g''$ from~(\ref{temp2}) in terms of~$g'$, equation~(\ref{quinta}) becomes 
$$2x(\beta+\gamma^2-\gamma\alpha)g' +[h''+\alpha h'+\beta h] =0$$
The system of equations reduces to $h''+\alpha h'+\beta h=0$, $g'=0$ and $k'=0$: we are in case~(\ref{itemcorr}) of Theorem~\ref{liecorr}.
\item[Not torsion-free] If~$\upsilon\neq 0$, from~(\ref{sieteyocho}), $b$ is constant and the system~(\ref{eq1})--(\ref{eq6}) reduces to $ a_{xx}=0$, $ a_{xy}=0$, $ a_{yy}-\beta xa_x+\alpha a_y+\beta a=0$. We conclude that~$a=k(y)x+h(y)$, that $k$ is constant and that~$h$ satisfies the differential equation~(\ref{dekillingk}). We are still in case~(\ref{itemcorr}) of Theorem~\ref{liecorr}.
\end{description}

In case~(\ref{mask3}) of Proposition~\ref{baselie}, since~$\indel{y}$ is a Killing vector field, the coefficients of the connection are functions of~$x$. Since~$x\indel{x}$ is, by hypothesis, a Killing vector field, the coefficients of the connection must satisfy the system $xA'+A=0$, $xB'+2B=0$, $xC'=0$, $xD'+D=0$, $xE'-E=0$, $xF'=0$, $xU'=0$, $xV'+V=0$. We conclude that~$A$, $B$, $D$ and~$V$ vanish, that~$C$, $F$ and~$U$ are constants and that~$E=\mu x$ for some~$\mu\in\mathbf{R}$.  The connection belongs to the family~(\ref{casogen}) for~$\beta=\mu$ and~$\delta=0$. By the previous study of the Killing fields of the connection~(\ref{casogen}), we are in case~(\ref{itemcorr}) of Theorem~\ref{liecorr}.

In case~(\ref{rabtor}) of Proposition~\ref{baselie}, if a connection~$\nabla$ has such a Lie algebra of Killing vector fields then, since the coordinate vector fields are Killing ones, the Christoffel symbols are constants.  By imposing the condition that~$(sx+y)\indel{x}+(sy-x)\indel{y}$ is a Killing vector field, the system~(\ref{primera})--(\ref{sieteyocho})  reduces to a linear system of eight equations in the eight constants $A,B,C$, etc. whose coefficients are polynomials in~$s$. The determinant of the system is~$(s^2+9)(s^2+1)^3$. The only connection that is preserved is the one with vanishing Christoffel symbols.\end{proof}

\section{Proof of the Main Theorem}
Theorem~\ref{liecorr} classifies the germs of locally homogeneous connexions.  In order to globalize this result into Theorem~\ref{thmmain}, we will resort to the theory of $(G,X)$\nobreakdash-structures, which we briefly recall (see~\cite{thurston} for details). If~$G$ is a Lie group acting transitively on the simply connected manifold~$X$, a~\emph{$(G,X)$\nobreakdash-structure} on a manifold~$M$ is an atlas for its smooth structure taking values on~$X$ whose changes of coordinates are restrictions of elements of~$G$. To a~$(G,X)$\nobreakdash-structure on~$M$ there corresponds a \emph{developing map} $\mathcal{D}:\widetilde{M}\to X$ and a \emph{holonomy} morphism~$\rho:\pi_1(M)\to G$ that satisfy, for every~$\nu\in\pi_1(M)$, the relation
\begin{equation}\label{dev-hol}\mathcal{D}(\nu\cdot p)=\rho(\nu)\cdot \mathcal{D}(p).\end{equation}

We will use~$(G,X)$\nobreakdash-structures through the following result~\cite{dum-gui}:
 
\begin{prop}\label{constgx} Let~$M$ be an orientable manifold, $\nabla$ an affine connection on~$M$. Suppose that~$\nabla$ is locally homogeneous everywhere. Let~$p\in M$, let~$\mathfrak{g}$ be the Lie algebra of Killing vector fields of~$\nabla$ in a neighborhood of~$p$ and let~$\mathfrak{g}_0\subset \mathfrak{g}$ the subalgebra of those Killing vector fields vanishing at~$p$. Let~$G$ be the connected and simply connected Lie group corresponding to~$\mathfrak{g}$ and~$G_0\subset G$ the subgroup corresponding to~$\mathfrak{g}_0$. If~$G_0$ is closed, then there exists a finite Galois covering~$\overline{\pi}:\overline{M}\to M$ with Galois group~$\mathrm{Isom}^+(\nabla_0)/G$, a connection~$\nabla_0$ on~$G/G_0$ and a~$(G,G/G_0)$\nobreakdash-structure on~$\overline{M}$ that is, moreover, an isometry between~$\overline{\pi}^*\nabla$ and~$\nabla_0$. \end{prop}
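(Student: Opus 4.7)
The plan is to carry out three steps: (i) build the model space $X=G/G_0$ together with its invariant connection $\nabla_0$; (ii) produce a $(G,X)$-atlas on a suitable cover $\overline{M}$ of $M$; and (iii) identify the Galois group with $\mathrm{Isom}^+(\nabla_0)/G$. The essential input throughout is the rigidity of an affine connection, which makes its pseudogroup of local isometries a finite-dimensional Lie pseudogroup and lets one globalize local isometries.

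For step (i), since $G_0$ is assumed closed, $X=G/G_0$ is a smooth manifold on which $G$ acts transitively. By definition the evaluation map $\mathfrak{g}\to T_p M$ has kernel exactly $\mathfrak{g}_0$, inducing an isomorphism $\mathfrak{g}/\mathfrak{g}_0\cong T_{eG_0}X$. Flowing an arbitrary linear complement of $\mathfrak{g}_0$ in $\mathfrak{g}$ from $p\in M$ and from $eG_0\in X$ respectively produces a local diffeomorphism $\psi$ matching neighborhoods of these two points. Set $\nabla_0:=\psi^*\nabla$ near $eG_0$; because every element of $\mathfrak{g}_0$ is a Killing field of $\nabla$ vanishing at $p$, this germ is $G_0$-invariant, and I would propagate it to a well-defined $G$-invariant connection $\nabla_0$ on all of $X$, the only ambiguity in the construction being an element of $G_0$ that acts trivially on the germ.

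For step (ii), local homogeneity provides, for each $q\in M$, a local isometry from a neighborhood of $q$ to a neighborhood of $p$, which composed with $\psi^{-1}$ yields a chart $\phi_q$ with values in $X$. The transition between two overlapping charts is a local isometry of $\nabla_0$; rigidity implies that, restricted to a connected component of the overlap, it extends uniquely to a global element of $\mathrm{Isom}^+(\nabla_0)$. A priori these transitions need not lie in the image of $G$, but $G$ (connected and simply connected with Lie algebra $\mathfrak{g}$) covers the identity component of $\mathrm{Isom}^+(\nabla_0)$, so the discrepancy is measured by a discrete quotient $\Gamma:=\mathrm{Isom}^+(\nabla_0)/G$. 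For step (iii), I take $\overline{\pi}:\overline{M}\to M$ to be the Galois cover associated with the kernel of the monodromy homomorphism $\pi_1(M)\to \Gamma$ that records in which coset of $G$ a loop's monodromy lies. On $\overline{M}$ the transitions lie in $G$, producing the desired $(G,X)$-structure, and by construction the developing map is an isometry between $\overline{\pi}^*\nabla$ and $\nabla_0$.

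The main obstacle is the global-to-local passage in step (ii): promoting the germ of a local isometry of $\nabla_0$ into a genuine element of $\mathrm{Isom}^+(\nabla_0)$ relies on the rigidity of connections and on the simply-connected choice of $G$, and one must verify that the resulting monodromy is valued in a \emph{finite} group $\Gamma$ so that $\overline{M}\to M$ is indeed a finite cover. Since this proposition is imported from \cite{dum-gui}, I would defer the detailed verification there; the present sketch is meant only to indicate why the statement is plausible and how it will be applied in our globalization of Theorem~\ref{liecorr}.
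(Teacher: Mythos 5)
The paper does not actually prove this proposition: it is quoted from \cite{dum-gui}, and your sketch is a faithful outline of the construction carried out there, with the two genuinely hard steps --- the Amores--Gromov-type extension of a germ of local isometry of~$\nabla_0$ to a global element of~$\mathrm{Isom}^+(\nabla_0)$ (which uses rigidity, local homogeneity, and the simple connectedness of~$G/G_0$ coming from $G$ simply connected and $G_0$ connected), and the finiteness of~$\mathrm{Isom}^+(\nabla_0)/G$ --- correctly identified and deferred to the reference, exactly as the paper itself does. One small discrepancy worth fixing: the connected cover associated to the kernel of the monodromy $\pi_1(M)\to\Gamma$ has Galois group equal to the \emph{image} of the monodromy, not all of~$\Gamma$; to obtain Galois group exactly $\mathrm{Isom}^+(\nabla_0)/G$ as stated one should take the associated, possibly disconnected, cover $\widetilde{M}\times_{\pi_1(M)}\Gamma$, which is precisely why the paper remarks that $\overline{M}$ need not be connected.
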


The covering~$\overline{M}$ is not necessarily connected: it may have a connected component in restriction to which~$\overline{\pi}$ is one-to-one.\\

We will study all the local models of Theorem~\ref{liecorr} in order to classify the compact models of each one of those geometries.

\subsection{The four-dimensional Lie algebras}
As we saw in the proof, cases~(\ref{itemcorr}) of Theorem~\ref{liecorr} correspond to the connections
\begin{equation}\label{conn4d} \nabla_{\del{x}}\del{x}=0,\; \nabla_{\del{x}}\del{y}=(\gamma+\upsilon)\del{x},\; \nabla_{\del{y}}\del{x}=(\gamma-\upsilon)\del{x},\;  \nabla_{\del{y}}\del{y}=\beta x\del{x}+(2\gamma-\alpha)\del{y} \end{equation}
for $\alpha$, $\beta$, $\gamma$ and~$\upsilon$ constants, but such that the torsion~(\ref{tor}) and the curvature~(\ref{curv4d}) do not vanish simultaneously.  These  connections  form  actually a family of dimension three, since the rescaling~$y\mapsto \mu y$ preserves the family and acts upon the coefficients as \begin{equation}\label{rescaling}(\alpha,\beta,\gamma,\upsilon)\mapsto (\mu\alpha,\mu^2\beta,\mu\gamma,\mu\upsilon).\end{equation}

\subsubsection{The isometries} We begin the study of the connections~(\ref{conn4d}) by studying their isometries. Let~$\mathcal{M}$ denote~$\mathbf{R}^2$ with the connection~$\nabla_0$ given by~(\ref{conn4d}). Notice that~$\kappa:\mathcal{M}\to \mathcal{M}$ given by~$\kappa(x,y)=(-x,y)$ is a globally-defined orientation-reversing isometry. Let~$\mathfrak{g}$ be the Lie algebra of vector fields on~$\mathcal{M}$ corresponding to the Killing Lie algebra of~$\nabla_0$ (it depends upon~$\alpha$ and~$\beta$ but not upon~$\gamma$ or~$\upsilon$). Let~$G$ be the connected and simply connected Lie group associated to~$\mathfrak{g}$.

We will begin by proving that~$\mathfrak{g}$ is induced by a transitive action of~$G$ and thus that~$\mathcal{M}$ is a homogeneous space of~$G$ (in particular, every Killing vector field is complete). Denote by~$\mathrm{Isom}^+(\nabla_0)$ the group of orientation-preserving isometries of~$\nabla_0$. We naturally have~$G\vartriangleleft\mathrm{Isom}^+(\nabla_0)$.

The Lie algebra~$\mathfrak{g}$ contains the subalgebra~$\mathfrak{h}\subset \mathfrak{g}$ generated by the commuting vector fields $X=x\indel{x}$ and $Y=\indel{y}$. These vector fields are complete and induce an effective action of~$\mathbf{R}^2$ on~$\mathcal{M}$ given by
\begin{equation}\label{actionh}(s,t)\cdot(x,y)= (xe^s,y+t).\end{equation} 
The Lie algebra~$\mathfrak{g}$ contains also the commutative two-dimensional subalgebra~$\mathfrak{k}\subset \mathfrak{g}$ of the vector fields of the form~$h(y)\indel{x}$ for the solutions~$h(y)$ of~(\ref{dekillingk}). Each of these vector fields is complete and its flow on~$M$ in time~$\tau$ is given by
\begin{equation}\label{actionk}(u,v)\cdot (x,y)=(x+\tau h(y),y).\end{equation}

As a vector space, $\mathfrak{g}=\mathfrak{h}\oplus\mathfrak{k}$. We have~$[X,h \indel{x}]=-h \indel{x}$ and~$[Y,h \indel{x}]=h' \indel{x}$ ($h'$ is a solution of~(\ref{dekillingk}) if~$h$ is). Hence, $\mathfrak{k}$ is the derived Lie algebra of~$\mathfrak{g}$, this is, $[\mathfrak{g},\mathfrak{g}]=\mathfrak{k}$.

By the completeness of each of the vector fields generating~$\mathfrak{g}$, the connected and simply connected Lie group~$G$ corresponding to~$\mathfrak{g}$ acts upon~$\mathcal{M}$ (see~\cite[Thm.~III, p.~95]{palais}; this follows also from explicitly integrating the action). This action is transitive and, in consequence, $\mathcal{M}$ is a homogeneous space of~$G$.

Let us now describe the structure of~$G$. To~$\mathfrak{h}$ and~$\mathfrak{k}$ correspond two two-dimensional subgroups~$H$ and~$K$ of~$G$. The group~$G$ is the semi-direct product
$H \ltimes K$ corresponding to some representation~$\Psi:H\to\mathrm{Aut}(K)$, which defines, in the set~$H\times K$, the product 
$$(h_1,k_1)\cdot(h_2,k_2)=(h_1+h_2,\Psi_{h_1}(k_2)+k_1).$$
As a transformation group, the element~$(h,k)$ of~$G$ means, with the above conventions for the semi-direct product, to act first by~$h\in H$ by~(\ref{actionh}) and then by~$k\in K$ by~(\ref{actionk}).

Let~$P(\xi)=\xi^2+\alpha \xi+\beta$. Let~$\Delta=\alpha^2-4\beta$ denote its discriminant. We will now give an ordered base~$(Z_1,Z_2)$ to~$\mathfrak{k}$ and calculate~$\Psi$ in each case. In all cases we will have~$[X,Z_i]=-Z_i$.
\begin{itemize}
\item If~$\Delta>0$, $P$ has two different real solutions~$a_1$ and~$a_2$. Set~$Z_i=e^{a_i y}\indel{x}$.  We have~$[Y,Z_i]=a_iZ_i$ and
$$\Psi(s,t )=\left(\begin{array}{rr} e^{s-a_1t} & 0 \\ 0 & e^{s -a_2t}  \end{array}\right).$$
\item If~$\Delta<0$, $P$ has two non-real complex conjugate solutions $a\pm ib$. Up to the rescaling~(\ref{rescaling}), we will suppose that~$b=1$ and set $Z_1=e^{ay}\cos(y)\indel{x}$, $Z_2=e^{ay}\sin(y)\indel{x}$. We have $[Y,Z_1]=aZ_1-Z_2$, $[Y,Z_2]=Z_1+aZ_2$, 
$$\Psi(s,t )=\left(\begin{array}{rr} e^{s-at}\cos(t) & -e^{s-at}\sin(t) \\ e^{s-at}\sin(t) & e^{s-at}\cos(t)  \end{array}\right).$$ 
\item If~$\Delta=0$, $P$ has a real solution~$a$ with multiplicity two. Set $Z_1=e^{ay}\indel{x}$, $Z_2=ye^{ay}\indel{x}$. We have $[Y,Z_1]=aZ_1$, $[Y,Z_2]=aZ_2+Z_1$ and
$$\Psi(s,t )=\left(\begin{array}{rr} e^{s-at} &-te^{s-at} \\ 0 & e^{s -at}  \end{array}\right).$$
\end{itemize}

There is a (non-singular) totally geodesic foliation~$\mathcal{F}_0$ on~$\mathcal{M}$, generated by $\indel{x}$. This foliation may be characterized as being the one tangent to~$[\mathfrak{g},\mathfrak{g}]$. It is thus naturally invariant by~$G$ and, more generally, by the isometry group of~$\nabla_0$ (this accounts for the imprimitiveness of the action of~$G$ on~$M$). By~(\ref{curv4d}), the leaves of this foliation are also the only lies of curvature of the connection.  \\

The group~$G=\mathbf{R}^2\ltimes \mathbf{R}^2$ acts transitively by isometries on~$\mathcal{M}$ and induces the full Killing Lie algebra of~$\nabla_0$. In most cases, it is the group of orientation-preserving isometries:

\begin{prop}\label{isom4d} If~$\mathrm{Isom}^+(\nabla_0)/G$ is not trivial then~$\alpha=2\gamma$ and~$\upsilon=0$; $G$ is the subgroup preserving the orientation of~$\mathcal{F}_0$; $\mathrm{Isom}^+(\nabla_0)/G\approx \mathbf{Z}/2\mathbf{Z}$ and is generated by the class of the orientation-preserving isometry $\sigma(x,y)= (-e^{\alpha y}x,-y)$.   \end{prop}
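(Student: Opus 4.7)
The plan is to locate any element of $\mathrm{Isom}^+(\nabla_0)$ outside~$G$ through its derivative at a fixed point, and then constrain that derivative using the torsion and curvature of~$\nabla_0$. Since $\mathfrak{g}$ is the full Killing algebra and $G$ acts transitively, $G$ is the identity component of $\mathrm{Isom}^+(\nabla_0)$ and the quotient $\mathrm{Isom}^+(\nabla_0)/G$ is discrete. Every coset has a representative~$\phi$ fixing the origin, and $\phi$ is determined by its derivative $A=d\phi|_0\in \mathrm{GL}^+(T_0\mathcal{M})$, since isometries of a connection are linear in exponential coordinates. The problem thus reduces to computing $\mathrm{Stab}_0^+/G_0$, keeping in mind that $G_0$ is connected and its image in $\mathrm{GL}(T_0\mathcal{M})$ consists of the upper-triangular matrices with lower-right entry~$1$ and positive upper-left entry (read off from the flows of $x\,\partial/\partial x$ and of those $h(y)\,\partial/\partial x$ with $h(0)=0$).

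Writing $A=(a_{ij})$ and using the tensorial identity $T(A\partial_x, A\partial_y) = (\det A)\, T(\partial_x, \partial_y)$, the requirement $A\cdot T(\partial_x, \partial_y) = T(A\partial_x, A\partial_y)$ applied to~(\ref{tor}) forces, if $\upsilon \neq 0$, $a_{21}=0$ and $a_{22}=1$. This places~$A$ already in the image of~$G_0$, so $\phi\in G$ and $\mathrm{Isom}^+(\nabla_0)=G$. Hence nontriviality of the quotient forces~$\upsilon=0$. With~$\upsilon=0$, the same manipulation applied to~(\ref{curv4d}) with $c=\beta+\gamma(\gamma-\alpha)\neq 0$ (the connection is not flat) yields $a_{21}=0$ and $a_{22}^2=1$. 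The component $a_{22}=1$ again places~$A$ in the $G_0$-image; a coset outside $G$ must have $a_{22}=-1$, and after composing with a suitable element of~$G_0$ we may normalize $A=-\mathrm{Id}$.

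Next I would show that such an isometry exists only when $\alpha=2\gamma$, in which case it coincides with $\sigma(x,y)=(-e^{\alpha y}x,-y)$. A direct pushforward gives $\sigma_*\partial_x=-e^{-\alpha y}\partial_x$ and $\sigma_*\partial_y=\alpha x\,\partial_x-\partial_y$, together with $\sigma^2=\mathrm{id}$ and $d\sigma|_0=-\mathrm{Id}$. The condition $\sigma_*(\nabla_{\partial_x}\partial_y)=\nabla_{\sigma_*\partial_x}(\sigma_*\partial_y)$, expanded using $\nabla_{\partial_x}\partial_x=0$ and $\nabla_{\partial_x}\partial_y=\gamma\,\partial_x$ (recall $\upsilon=0$), reduces to the single identity $\gamma=\alpha-\gamma$, i.e.~$\alpha=2\gamma$. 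The remaining three coordinate Christoffel identities will then be verified to be automatic under $\alpha=2\gamma$ and $\upsilon=0$. By the uniqueness of an isometry with prescribed derivative at a fixed point, any representative of a nontrivial coset must equal~$\sigma$; hence $\alpha=2\gamma$ is necessary, and the quotient has order exactly two, generated by $[\sigma]$.

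Finally, that $G$ is exactly the subgroup preserving the orientation of~$\mathcal{F}_0$ follows directly from~(\ref{actionh}) and~(\ref{actionk}): the flows of $x\,\partial/\partial x$, $\partial/\partial y$ and $h(y)\,\partial/\partial x$ scale the leafwise direction $\partial/\partial x$ by a positive factor, whereas $\sigma_*\partial_x$ is a negative multiple of $\partial_x$, so $\sigma$ reverses leaf orientation and the index-two subgroup that preserves it must be~$G$. The main obstacle in the plan is the third-paragraph verification, which must isolate the equation $\alpha=2\gamma$ from the mere existence of an isometry with derivative $-\mathrm{Id}$; everything else is organizational, leaning on the discreteness of the quotient and the rigidity of isometries of a connection at a fixed point.
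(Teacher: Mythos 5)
Your overall strategy is sound and genuinely different from the paper's: you work with the isotropy representation at a fixed point and constrain $d\phi|_0$ by the invariance of the torsion and curvature tensors (using that in dimension two $T_0(Au,Av)=\det(A)\,T_0(u,v)$ and $R_0(Au,Av)=\det(A)\,R_0(u,v)$), together with the rigidity of affine maps (determined by their $1$-jet at a point). The computations you outline in the first two paragraphs are correct: torsion-invariance with $\upsilon\neq 0$ forces $a_{21}=0$, $a_{22}=1$, hence $\phi\in G$; with $\upsilon=0$ and $\beta+\gamma(\gamma-\alpha)\neq 0$, curvature-invariance forces $a_{21}=0$, $a_{22}^2=1$, and the case $a_{22}=-1$ can indeed be normalized to $d\phi|_0=-\mathrm{Id}$ by composing with $G_0$. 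The paper argues differently: it writes any $\phi\in\mathrm{Isom}^+(\nabla_0)$ in the normal form $(x,y)\mapsto(p(y)x+q(y),c^{-1}y)$ forced by the invariant foliation and the affine structures on its leaves and leaf space, extracts $p'/p=\frac{\alpha}{2}(1-\frac1c)$ and $(c-1)(c+1)(\alpha^2-4\beta)=0$ from preservation of the Killing algebra, and then gets $\gamma=\alpha/2$, $\upsilon=0$, $c=-1$ from preservation of the connection.

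However, there is a genuine gap exactly where you flag it: the necessity of $\alpha=2\gamma$. Your third paragraph only verifies that the specific map $\sigma$ is an isometry if and only if $\alpha=2\gamma$ (that computation is correct), but this does not rule out that for $\alpha\neq 2\gamma$ some \emph{other} isometry with $d\phi|_0=-\mathrm{Id}$ exists. The appeal to ``uniqueness of an isometry with prescribed derivative at a fixed point'' is misapplied: it lets you conclude $\phi=\sigma$ only once $\sigma$ is already known to be an isometry, i.e.\ only in the case $\alpha=2\gamma$; for $\alpha\neq2\gamma$ it gives nothing, so the implication ``nontrivial coset $\Rightarrow\alpha=2\gamma$'' is not proved. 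The gap is fixable within your framework: since $\nabla R$ is a tensor of type $(1,4)$, invariance under $d\phi|_0=-\mathrm{Id}$ forces $(\nabla R)_0=0$, and a direct computation for the connection~(\ref{conn4d}) with $\upsilon=0$ gives $(\nabla_{\partial/\partial y}R)(\partial/\partial x,\partial/\partial y)\,\partial/\partial y=-2(2\gamma-\alpha)\bigl[\beta+\gamma(\gamma-\alpha)\bigr]\,\partial/\partial x$, which by non-flatness vanishes only when $\alpha=2\gamma$. (Alternatively, identify $\phi$ near $0$ with the geodesic symmetry $\exp_0\circ(-\mathrm{Id})\circ\exp_0^{-1}$ and check directly when it is affine, or follow the paper's normal-form argument.) With that step supplied, the rest of your outline, including the identification of $G$ as the subgroup preserving the orientation of $\mathcal{F}_0$ and the order-two quotient generated by $[\sigma]$, goes through.
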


\begin{proof}  Let~$\phi\in\mathrm{Isom}^+(\nabla_0)$. Up to the action of~$G$ we may suppose that~$\phi$ fixes the origin.  Any isometry should preserve~$\mathcal{F}_0$. Since, in restriction to each leaf, $\indel{x}$ is a geodesic vector field, $\phi$ must act affinely upon the leaves. Similarly, the Killing Lie algebra of~$\nabla_0$ acts upon the leaf space of~$\mathcal{F}_0$ and the image of~$\indel{y}$ generates the image of the Killing algebra in this space. Thus, $\phi$ must act affinely upon the leaf space of~$\mathcal{F}_0$ and must be of the form $$(\overline{x},\overline{y})=\phi(x,y)=(p(y)x+q(y),c^{-1}y),$$
for some~$c\in\mathbf{R}^*$ and some functions~$p$ and~$q$. Under this transformation we have, for every~$h$ such that~$h''+\alpha h'+\beta h=0$,
$$h(y)\del{x}  \mapsto    p(c\overline{y})h(c\overline{y})\del{\overline{x}}.$$
The coefficient~$g(y)=p(cy)h(cy)$  must satisfy the differential equation~(\ref{dekillingk}) with respect to~$y$ and thus, for~$u(\xi)=p'(\xi)/p(\xi)$,
\begin{multline*}\frac{1}{p}(g''+\alpha g'+\beta g)=c\left[2 c u(cy)+\alpha(1-c)\right]h'(cy)+\\+\left[c^2u'(cy)+c^2u^2(cy)+\beta(1-c^2)+\alpha cu(cy)\right]h(cy)=0 \end{multline*}
must vanish term by term. For the first term to vanish we need
\begin{equation}\label{condisom1}u=\frac{p'}{p}=\frac{\alpha}{2}\left(1-\frac{1}{c}\right).\end{equation}
Substituting this into the second term, we obtain
\begin{equation}\label{condisom2}(c-1)(c+1)(\alpha^2-4\beta)=0.\end{equation}
We also have
$$x\del{x} \mapsto  [\overline{x}-q(c\overline{y})]\del{\overline{x}}.$$
In particular, if~$c=1$, from~(\ref{condisom1}), $p$ is a constant (positive by the orientation-preserving hypothesis) and, from the above formula, $q(y)\indel{x}$ must be a Killing vector field. Hence,  $q''+\alpha q'+\beta q=0$. This implies that~$\phi$ belongs to~$G$ and proves the Proposition if~$c\neq 1$.

We will henceforth suppose that~$c\neq 1$. We have
\begin{eqnarray*}
\del{x}  & \mapsto &  A = p(c\overline{y})\del{\overline{x}},\\
\del{y} & \mapsto & B = \left(\frac{p'(c\overline{y})}{p(c\overline{y})}[\overline{x}-q(c\overline{y})]+q'(c\overline{y})\right)\del{\overline{x}}+ c^{-1}\del{\overline{y}}.  
\end{eqnarray*}
In particular, using~(\ref{condisom1}),
\begin{eqnarray*}\nabla_A B-(\gamma+\upsilon)A & = & \frac{1}{2}(\alpha-2\gamma-2\upsilon)\left(1-\frac{1}{c}\right)A\\
\nabla_B A -(\gamma-\upsilon )A & = &\frac{1}{2} (\alpha-2\gamma+2\upsilon)\left(1-\frac{1}{c}\right)A\end{eqnarray*}

Since~$\phi$ is an isometry, the right-hand sides of these equations vanish and thus~$\gamma=\alpha/2$ and~$\upsilon=0$ (in particular, the torsion tensor vanishes). Under these conditions, the curvature~(\ref{curv4d}) will vanish if~$\alpha^2=4\beta$. Since we are assuming that the torsion and the curvature do not vanish simultaneously, from~(\ref{condisom2}), we should have~$c=-1$. If~$\gamma=\alpha/2$ and~$\upsilon=0$, $\sigma$ is an orientation-preserving isometry that belongs to the case~$c=-1$. Up to composing~$\phi$ with~$\sigma$, we are back in the case~$c=1$, proving the Proposition.
\end{proof}

\subsubsection{Global models} We will begin the proof of Theorem~\ref{thmmain}. Let~$S$ be a compact orientable surface endowed with a connection~$\nabla$ such that, for every~$p\in S$, $\nabla$ is locally given by~(\ref{conn4d}), for fixed~$\alpha,\beta,\gamma$ and~$\upsilon$. We begin by proving the following:
\begin{prop}\label{2vf4d} Up to a double covering, $S$ is a torus and there exist two globally defined commuting Killing vector fields of~$\nabla$ in~$S$ that are linearly independent almost everywhere.
\end{prop}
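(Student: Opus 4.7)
The plan is to reduce the global analysis to a problem about discrete quotients via the $(G,\mathcal{M})$-structure formalism, and then to use the abelianness of the holonomy to locate two commuting Killing vector fields. The argument will split into three stages.

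First, I would apply Propositions~\ref{constgx} and~\ref{isom4d} jointly. The stabilizer $G_0\subset G$ of a point of $\mathcal{M}$ is closed (it is the isotropy of a smooth transitive action), so Proposition~\ref{constgx} produces a finite Galois covering $\overline\pi:\overline S\to S$ carrying a $(G,\mathcal{M})$-structure that pulls back the connection, and Proposition~\ref{isom4d} bounds the degree of $\overline\pi$ by two. Second, since the foliation $\mathcal{F}_0$ on $\mathcal{M}$ is invariant under every element of $\mathrm{Isom}^+(\nabla_0)$, it descends to a non-singular foliation on $\overline S$; hence the Euler characteristic of $\overline S$ vanishes and, by orientability, $\overline S$ is a torus.

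For the two commuting Killing vector fields, a vector $v\in\mathfrak{g}$ determines a globally well-defined Killing vector field on $\overline S$ precisely when it is $\mathrm{Ad}(\Gamma)$-invariant, where $\Gamma=\rho(\pi_1(\overline S))\subset G$. Since $\pi_1(\overline S)\cong\mathbf{Z}^2$ is abelian, $\Gamma$ is an abelian subgroup of $G$. The derived algebra $\mathfrak{k}=[\mathfrak{g},\mathfrak{g}]$ is $\mathrm{Ad}$-stable, and all its elements are tangent to $\mathcal{F}_0$, so the two desired fields cannot both lie in $\mathfrak{k}$: one must be transverse to the foliation. I would analyse $\Gamma$ using the decomposition $G=H\ltimes K$ and the explicit formulas for $\Psi$ in each of the three regimes of $\Delta=\alpha^2-4\beta$, noting that, since $K$ is abelian, the action of $\mathrm{Ad}(\Gamma)$ on $\mathfrak{k}$ factors through the projection $\Gamma\to H$. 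The goal would be to exhibit, after passing if needed to an index-two subgroup of $\Gamma$ (which corresponds to a further double cover that is absorbed into the single double covering allowed by the statement), a two-dimensional commutative subalgebra of $\mathfrak{g}$ fixed by $\mathrm{Ad}(\Gamma)$ and containing one vector tangent and one transverse to $\mathcal{F}_0$; the corresponding commuting Killing vector fields on $\overline S$ will then be linearly independent off the proper analytic locus where the tangent one vanishes.

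The hard part will be this last step: because $\mathfrak{h}$ is not a characteristic complement to $\mathfrak{k}$, the required transverse invariant direction has to be recovered from the specific position of $\Gamma$ in $G$, and a careful case analysis by $\Delta$ (hyperbolic, elliptic, or parabolic) is needed, with each case having its own mechanism for $\mathrm{Ad}(\Gamma)$ to trivialise on a suitable transverse line after an index-two passage.
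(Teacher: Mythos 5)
Your first two stages (the double covering from Propositions~\ref{constgx} and~\ref{isom4d}, the descent of~$\mathcal{F}_0$ and the Poincar\'e--Hopf argument showing the cover is a torus, the abelianness of~$\Gamma=\rho(\pi_1)$, and the criterion that $v\in\mathfrak{g}$ descends exactly when it is fixed by~$\mathrm{Ad}(\Gamma)$) coincide with the paper's setup, and your final mechanism --- exhibit a two-dimensional abelian subalgebra pointwise fixed by~$\mathrm{Ad}(\Gamma)$ containing a vector transverse to~$\mathcal{F}_0$ --- is exactly how the paper produces the two fields (it conjugates $\Gamma$ into~$H$, or into the abelian subgroups generated by $a_1X+Y$ and~$Z_1$, by $a_1X+Y$ and $a_2X+Y+cZ_1$, or by $aX+Y+cZ_2$ and~$Z_1$, and abelianness of those subgroups gives the invariance for free). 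The genuine gap is in the step you defer: a purely algebraic analysis of $\mathrm{Ad}(\Gamma)$-fixed vectors, even allowing index-two passages, cannot succeed, because there are abelian rank-two subgroups of~$G$ admitting \emph{no} invariant vector transverse to~$\mathcal{F}_0$. Indeed, differentiating~(\ref{conjug}) shows that $\mathrm{Ad}\bigl((0,q)\bigr)$ fixes $c_XX+c_YY+w$ (with $w\in\mathfrak{k}$) only if $(c_X\mathbf{1}+c_YN)q=0$, where $N=\partial_t\Psi_{(0,t)}|_{t=0}$. If $\Gamma$ is a lattice in~$K$ (or, more generally, lies in~$I_{\mathcal{F}_0}$), this forces $c_X=c_Y=0$; and when $\Delta<0$ the only elements with $\mathbf{1}-\Psi_h$ non-invertible are the periods $h=(2am\pi,2m\pi)$, so a holonomy not conjugate into~$H$ necessarily contains a nontrivial element of~$K$, and since $N$ has no real eigenvalue the same computation kills every transverse invariant vector. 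Passing to finite-index subgroups of~$\Gamma$ does not remove these obstructions.

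These holonomies are excluded in the paper not by algebra but by global arguments using the developing map and compactness, and this is the missing idea in your plan: when $\rho$ takes values in~$I_{\mathcal{F}_0}$ the coordinate~$y$ is holonomy-invariant and would descend to a function on the compact surface that is locally a submersion to~$\mathbf{R}$, hence has no maximum; and when $\Delta<0$ one finds a compact leaf of the induced foliation (a closed geodesic) whose holonomy, by applying~$y$ to~(\ref{dev-hol}), would have to be trivial --- a contradiction, so no compact model exists at all in that case. Without arguments of this type your case analysis stalls precisely at what you call the hard part. A secondary bookkeeping issue: your ``further index-two passage'' would give a covering of degree four over~$S$ whenever the covering of Proposition~\ref{isom4d} is already nontrivial, which exceeds the double covering allowed by the statement; in the paper an auxiliary finite cover is invoked only in the $\Delta<0$ case, which is then shown to be empty, so the final conclusion genuinely needs only one double cover.
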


By Propositions~\ref{constgx} and~\ref{isom4d}, up to a double covering,  $S$ has a $(G,\mathcal{M})$\nobreakdash-structure. We have a developing map~$\mathcal{D}:\widetilde{S}\to\mathbf{R}^2$ and a holonomy morphism~$\rho:\pi_1(S)\to G$ that satisfy, for every~$\nu\in\pi_1(S)$ and every~$p\in \widetilde{S}$, the relation~(\ref{dev-hol}). Our aim is to show that the Lie algebra of the Zariski closure of the image of the holonomy provides the required vector fields.

By invariance, $\mathcal{F}_0$ induces a non-singular and oriented foliation~$\mathcal{F}$ in~$S$. By the Poincar\'e-Hopf index Theorem, $S$ is a torus. The fundamental group is a free Abelian group in two generators.

We will begin by investigating the morphisms~$\rho:\mathbf{Z}^2\to G$ (the ones giving all the admissible holonomies) and the corresponding compatible developing maps. In order to prove Proposition~\ref{2vf4d}, we will show that the Zariski closure of~$\rho(\pi_1(S))$, which is naturally invariant by the adjoint action of~$\rho(\pi_1(S))$, acts transitively on some subset of~$\mathcal{M}$.\\

If~$\rho$ takes values in~$H$, the vector fields~$X$ and~$Y$ are preserved under the action of~$\rho$ and are thus well-defined in~$S$. In this case, these are the ones proving Proposition~\ref{2vf4d}.  

If~$\rho$ takes values in the group~$I_{\mathcal{F}_0}$ generated by~$K$ and~$X$ (the Lie algebra of Killing vector fields preserving~$\mathcal{F}_0$ leafwise), the holonomy preserves the function~$y:\mathcal{M}\to \mathbf{R}$. There is thus a function~$\overline{y}:S\to\mathbf{R}$ (induced by~$y$) which must have a maximum. But this is impossible, for~$\overline{y}$ is locally modeled on the submersion~$y$. We conclude that  such morphisms are not holonomy maps, for there is no compatible developing map.

The condition for the elements~$(h_1,k_1)$ and~$(h_2,k_2)$ of~$G$ to commute is that
\begin{equation}\label{comb}(\mathbf{1}-\Psi_{h_1})k_2=(\mathbf{1}-\Psi_{h_2})k_1,\end{equation}
with~$\mathbf{1}$ representing the identity map. We have 
\begin{equation}\label{conjug} (0,q)(h_1,k_1)(0,-q)=(h_1,(\mathbf{1}-\Psi_{h_1})(q)+k_1).\end{equation}

Hence, if~$(\mathbf{1}-\Psi_{h_1})$ is invertible, there exists~$q$ such that~$(0,q)$ conjugates $(h_1,k_1)$ to~$(h_1,0)$. But if~$(h_1,0)$ and~$(h_2,k_2)$ commute and~$(\mathbf{1}-\Psi_{h_1})$ is invertible, from~(\ref{comb}), $k_2=0$. Thus, up to conjugacy, if~$(\mathbf{1}-\Psi_{h_i})$ is invertible for some~$i$, $\rho$ takes values in~$H$. 

There remain the cases where neither~$\mathbf{1}-\Psi_{h_1}$ nor~$\mathbf{1}-\Psi_{h_2}$ are invertible and where the image of the holonomy is not in~$I_{\mathcal{F}_0}$. We will deal with them in function of the structure of~$G$, which is determinmed by~$\Delta$, the discriminant of~$P$.

\subsubsection*{When $\Delta>0$} We will suppose that~$h_i=(s_i,t_i)$, $k_i=(u_i,v_i)$. The lack of invertibility of~$\mathbf{1}-\Psi_{h_i}$ is equivalent to the fact that~$(s_i-a_1t_i)(s_i-a_2t_i)=0$. Up to the natural symmetries, we may suppose that~$s_1=a_1t_1$ with~$t_1\neq 0$ (for if~$h_1$ and~$h_2$ are zero, the holonomy takes values on~$I_{\mathcal{F}_0}$). Since~$s_1\neq a_2t_1$, up to a conjugacy of the form~(\ref{conjug}), we can suppose that~$v_1=0$.

\begin{enumerate}
\item If~$s_2=a_1t_2$, condition~(\ref{comb}) is
$$\left(\begin{array}{rr} 0 & 0 \\ 0 & 1-e^{(a_1 -a_2)t_1}  \end{array}\right)\left(\begin{array}{r} u_2 \\ v_2  \end{array}\right)=\left(\begin{array}{rr} 0 & 0 \\ 0 & 1-e^{(a_1 -a_2)t_2} \end{array}\right)\left(\begin{array}{r} u_1\\ 0  \end{array}\right),$$
this is, $[1-e^{(a_1 -a_2)t_1}]v_2 = 0$. We conclude that~$v_2=0$. There is a Lie group morphism~$\Phi:\mathbf{R}^2\to G=H\ltimes K$ given by
\begin{equation}\label{action1}\Phi(\xi,\zeta)=((a_1\xi,\xi),(\zeta,0)),\end{equation}
corresponding to the Abelian two-dimensional subalgebra of~$\mathfrak{g}$ generated by~$a_1X+Y$ and~$Z_1$. We have that~$(h_i,k_i)=\Phi(t_i,u_i)$. The corresponding action of~$\mathbf{R}^2$ upon~$\mathcal{M}$ is free and transitive.
\item If~$s_2=a_2t_2$, condition~(\ref{comb}) reads
$$\left(\begin{array}{rr} 0 & 0 \\ 0 & 1-e^{(a_1 -a_2)t_1}  \end{array}\right)\left(\begin{array}{r} u_2 \\ v_2  \end{array}\right)=\left(\begin{array}{rr} 1-e^{(a_2 -a_1)t_2} & 0 \\ 0 & 0\end{array}\right)\left(\begin{array}{r} u_1\\ 0  \end{array}\right),$$
this is, $[1-e^{(a_1 -a_2)t_1}]v_2=0$ (hence~$v_2=0$) and~$[1-e^{(a_2 -a_1)t_2}]u_1=0$. If~$t_2=0$, $(h_i,k_i)=\Phi(t_i,u_i)$ for the morphism~(\ref{action1}). If~$t_2\neq 0$, $u_1=0$. This implies that~$(h_1,k_1)$ is the flow of~$a_1X+Y$ in time~$t_1$ and that, for~$c=u_2(a_1-a_2)/(1-e^{(a_2-a_1)t_2})$, $(h_2,k_2)$ is the flow of~$a_2X+Y+cZ_1$ in time~$t_2$. These two vector fields commute and are linearly independent in the complement of the curve~$C_0$ given by $(a_1-a_2)x=ce^{a_1y}$.
\end{enumerate}

\subsubsection*{When $\Delta<0$} The lack of invertibility of~$\mathbf{1}-\Psi_{h_i}$ is equivalent to the fact that
$h_i=(2a m_i\pi,2 m_i\pi)$, for some~$m_i\in\mathbf{Z}$. Up to a finite cover of~$S$ and a change of generators of~$\pi_1(S)$, we may suppose that~$h_1=(2a m \pi,2m\pi)$ and~$h_2=(0,0)$, with~$m\in\mathbf{Z}$, $m\neq 0$. The corresponding actions  upon~$\mathcal{M}$ are respectively given by
$$(x,y)\mapsto (e^{2a m \pi}[x+e^{ay}(\cos(y)u_1+\sin(y)v_1)],y+2m\pi),$$
$$(x,y)\mapsto (x+e^{ay}(\cos(y)u_2+\sin(y)v_2),y).$$
The function~$\overline{y}:\mathcal{M}\to\mathbf{R}/2m\pi\mathbf{Z}$ given by the class of~$y$ is invariant by holonomy. It induces a function~$\overline{y}:S\to\mathbf{R}/2m\pi\mathbf{Z}$ whose level curves are union of leaves of~$\mathcal{F}$ (in particular, the leaves of this foliation are closed). Since~$m\neq 0$ and~$S$ is connected, $y(\mathcal{D}(\widetilde{S}))=\mathbf{R}$. Let~$y_0$ be such that~$\cos(y_0)u_2+\sin(y_0)v_2=0$. Any point of the form~$(x,y_0)$ is fixed by~$(h_2,k_2)$. Let~$L_0$ be the leaf of~$\mathcal{F}_0$ such that~$y(L)=y_0$ and let~$L$ be its image in~$S$. It is a closed geodesic.  Let~$\nu\in\pi_1(S)$ be the element corresponding to~$L$. For~$p\in L_0$ we have the relation~(\ref{dev-hol}). Applying~$y$ to both sides of the equation, we conclude that the action of~$\rho(\nu)$ on~$\mathcal{M}$ does not change the value of~$y$ and thus~$\rho(\nu)$, which belongs to the group generated by~$(h_2,k_2)$, is trivial. However, $\mathcal{D}|_{\pi^{-1}(L)}:\pi^{-
1}(L)\to L_0$ is the developing map of a closed geodesic and its holonomy cannot be trivial. This contradiction shows that there are no compact models in this case.

\subsubsection*{When $\Delta=0$} We have~$h_1=(at_1,t_1)$ with~$t_1\neq 0$ (for if~$h_1$ and~$h_2$ are zero, the holonomy takes values on~$I_{\mathcal{F}_0}$). Condition~(\ref{comb}) reads
$$\left(\begin{array}{rr} 0 & -t_1 \\ 0 & 0  \end{array}\right)\left(\begin{array}{r} u_2 \\ v_2  \end{array}\right)=\left(\begin{array}{rr} 0 & -t_2 \\ 0 & 0  \end{array}\right)\left(\begin{array}{r} u_1\\ v_1  \end{array}\right),$$
this is, $t_1 v_2=t_2 v_1$. For~$c\in\mathbf{R}$, there is a Lie group morphism~$\Phi_c:\mathbf{R}^2\to G$ given by
$\Phi_c(\xi,\zeta)=((a\xi,\xi),(\zeta-\frac{1}{2}c\xi^2,c\xi))$, corresponding to the Lie subalgebra of~$\mathfrak{g}$ generated by~$Z_1$ and  $aX+Y+c Z_2$, whose action upon~$\mathcal{M}$ is free and transitive. We have~$(h_i,k_i)=\Phi_c(t_i,u_i+\frac{1}{2}ct_i^2)$ for~$c=v_1/t_1$. \\

We thus found, in each admissible case, a couple of vector fields in~$\mathcal{M}$ which commute and which  are linearly independent everywhere or in the complement of a curve. They are, by definition, holonomy invariant and are thus induce, via~$\Pi$ and~$\mathcal{D}$, two globally-defined and commuting vector fields in~$S$. This finishes the proof of Proposition~\ref{2vf4d}. \\

Let us now prove Theorem~\ref{thmmain} for the family of connections~(\ref{conn4d}) in the case where the covering of~$S$ produced by Proposition~\ref{constgx} is trivial and, in particular, when $G=\mathrm{Isom}^+(\nabla_0)$. From Proposition~\ref{2vf4d}, we know that there are two commuting, holonomy-invariant, Killing vector fields on~$\mathcal{M}$ which are linearly independent in the complement of a curve~$C_0$. The curve~$\mathcal{D}^{-1}(C_0)$ in~$\widetilde{S}$ is invariant by deck translations and induces a (not necessarily connected) compact curve~$C$ on~$S$ where the vector fields inducing the action of~$\mathbf{R}^2$ have rank smaller than two. Our aim is to prove that~$C$ is empty, this is, that either~$C_0$ is empty or that the image of~$\mathcal{D}$ does not intersect~$C_0$. This would imply that the action of~$\mathbf{R}^2$ in~$S$ is locally free. From the proof of Proposition~\ref{2vf4d}, there are two situations where~$C_0$ is not empty. In both of them, $C_0$ is of the form~$x=f(y)$ for some function~$f$. Hence, every leaf of~$\mathcal{F}_0$ (level curve of~$y$) intersects one and only one point of~$C_0$, and  the corresponding action of~$\mathbf{R}^2$ on~$C_0$ is transitive (the commuting vector fields do not vanish simultaneously at any point). In particular, a linear combination of the commuting vector fields vanishing at some point of~$C_0$ must vanish identically along~$C_0$. Cut~$S$ along~$C$ in order to obtain a finite number of connected manifolds-with-boundary. Let~$\Omega$ be such a manifold. Since~$C$ is transverse to~$\mathcal{F}$, by the Poincar\'e-Hopf index theorem, $\Omega$ is a cylinder and has two boundary components $\mathcal{C}_1$ and~$\mathcal{C}_2$, connected components of~$C$. Let~$V$ be an auxiliary vector field, nowhere vanishing, tangent to the oriented foliation~$\mathcal{F}$ and such that, along~$\mathcal{C}_1$, it points inwards. No curve of~$\mathcal{F}$ starting at $\mathcal{C}_1$ may intersect~$\mathcal{C}_1$ again, for it would do so pointing outwards. No curve of~$\mathcal{F}$ starting 
at $\mathcal{C}_1$ may intersect~$\mathcal{C}_2$, for this would imply that a leaf of~$\mathcal{F}_0$ intersects~$C_0$ twice. Thus, every orbit starting at~$\mathcal{C}_1$ accumulates to some subset of~$\Omega$. By the Poincar\'e-Bendixon theorem,  this limit set 
is a closed orbit~$O$ of~$\mathcal{F}$. Furthermore, $O$ and~$\mathcal{C}_1$ 
are both simple closed curves generating the homology of~$\Omega$. Let~$\gamma$ be a generator of the fundamental group of~$\Omega$. On one hand, since~$\gamma$ is homologous to~$\mathcal{C}_1$, the holonomy of~$\gamma$ must act without fixed points in~$C_0$. On the other, since~$\gamma$ is homologous to~$O$, its holonomy must preserve the leaf~$L$ of~$\mathcal{F}_0$ corresponding to~$O$, and must fix the point~$L\cap C_0$. This contradiction shows that~$C$ is actually
empty. This proves Theorem~\ref{thmmain} in this case.\\

Let us now deal with the case where the covering of~$S$ is not trivial. According to Propositions~\ref{constgx} and~\ref{isom4d}, there is a connected open subset~$\Omega\subset \mathcal{M}$ and a group~$\Gamma\subset \mathrm{Isom}^+(\nabla_0)$ acting properly discontinuously on~$\Omega$ whose quotient is~$S$. There is a subgroup~$\overline{\Gamma}\subset\Gamma$ such that~$[\Gamma:\overline{\Gamma}]=2$ and such that~$\overline{\Gamma}\subset G$. We have~$\overline{S}=\Omega/\overline{\Gamma}$ and a natural two-fold covering~$\pi:\overline{S}\to S$. Let~$\zeta\in\Gamma\setminus G$. The orientation of~$\mathcal{M}$ is preserved by~$\zeta$ but that of~$\mathcal{F}$ is inversed. Because~$\zeta$ is conjugated to the isometry~$\sigma$ of Proposition~\ref{isom4d}, it has a fixed point in~$\mathcal{M}$. The case~$\Omega=\mathcal{M}$ must be excluded, for~$\Gamma$ must act without fixed points in~$\Omega$. In the other cases, $\Omega$ is the complement of the curve~$C_0$ given by $x=f(y)$ for some~$f$. This means that every leaf of~$\mathcal{F}_0$ intersects~$C_0$. But now~$\zeta$ cannot inverse the orientation of~$\mathcal{F}$ and preserve~$\Omega$. We conclude that these cases do not arrive and that the finite covering given by Proposition~\ref{constgx} is in fact trivial. This finishes the proof of Theorem~\ref{thmmain} for the family of connections~(\ref{conn4d}).

\subsection{The homogeneous spaces of~$\mathrm{SL}(2,\mathbf{R})$}
In this case, the model space is a two-dimensional homogeneous space of the universal covering of~$\mathrm{SL}(2,\mathbf{R})$, that we will denote by~$G$. The space is of the form~$G/H$ for~$H$ a closed one-parameter subgroup of~$G$. The connection on~$G/H$ is one that is invariant under the action of~$G$. By Proposition~\ref{constgx}, up to a finite covering, there exists a left-invariant connection~$\nabla_0$ on~$G/H$ and a~$(G,G/H)$\nobreakdash-structure on~$S$ which is an isometry between~$\nabla$ and~$\nabla_0$. This, in its turn, induces a developing map and  holonomy~$\mathcal{D}:\widetilde{S}\to G/H$ and~$\rho:\pi_1(S)\to G$. The center of~$G$ is cyclic and acts upon~$G/H$ by isometries. For any subgroup~$Z$ of the center, there are coverings~$G\to Z\backslash G$ and $G/H\to Z\backslash G/H$. From the above developing map and holonomy we may define naturally~$\mathcal{D}_Z:\widetilde{S}\to Z\backslash G/H$ as~$Z\backslash \mathcal{D}$ and~$\rho_Z:\pi_1(S)\to Z\backslash G$ as~$Z\backslash \rho$, that 
still satisfy the relation~(\ref{dev-hol}). In other words, we may suppose that~$S$ is locally modeled in~$Z\backslash G/H$ with changes of coordinates in~$Z\backslash G$.

The group $\mathrm{SL}(2,\mathbf{R})$ has, up to conjugacy, three one-parameter subgroups:
\begin{equation}\label{sl2sbg}\left\{\left(\begin{array}{cc}e^t & 0  \\ 0 & e^{-t}\end{array}\right)\right\}, \left\{\left(\begin{array}{rr}\cos(t) & \sin(t)  \\ -\sin(t) & \cos(t) \end{array}\right)\right\}, \left\{\left(\begin{array}{cc}1  & t  \\ 0 & 1 \end{array}\right)\right\},\end{equation}
that turn out to be closed. They are called, respectively, \emph{semisimple}, \emph{orthogonal} and \emph{unipotent}. Each one of these gives one two-dimensional homogeneous space of~$\mathrm{SL}(2,\mathbf{R})$.

\subsubsection{Unipotent stabilizer} The associated homogeneous space is~$\mathbf{R}^2\setminus\{0\}$ with the standard linear action of~$\mathrm{SL}(2,\mathbf{R})$. This action preserves the foliation~$\mathcal{F}_0$ of lines through the origin. The surface~$S$ is thus a torus. We have a developing map~$\mathcal{D}:\widetilde{S}\to \mathbf{R}^2\setminus\{0\}$ and a holonomy~$\rho:\pi_1(S)\to\mathrm{SL}(2,\mathbf{R})$.   Since the holonomy is Abelian, it is virtually contained in a one-parameter subgroup and thus, up to a finite covering of~$S$, it is contained in a one-parameter subgroup. Up to conjugacy, we may suppose that the holonomy is contained in one of the groups~(\ref{sl2sbg}). Let us show, in each of these cases, that there exists a holonomy-invariant submersion~$f_0:\mathbf{R}^2\setminus\{0\}\to\mathbf{R}$. In the case of semisimple holonomy, $f_0(x,y)=xy$;  in the case of orthogonal holonomy, $f_0(x,y)=x^2+y^2$; in the case of unipotent holonomy, $f_0(x,y)=x$. This induces a submersion~$f:S\to\mathbf{R}$. However, since~$S$ is compact, there is no submersion taking values in~$\mathbf{R}$ which is globally defined in~$S$.

\subsubsection{Semisimple stabilizer} The case where the model space~$\mathcal{M}$ can be chosen to be the complement of the diagonal in~$\mathbf{RP}^1\times\mathbf{RP}^1$, with the diagonal action of~$\mathrm{PSL}(2,\mathbf{R})$ by fractional linear transformations in the chart~$(x,y)\mapsto([x:1],[y:1])$. The projection onto the first factor gives a foliation~$\mathcal{F}_0$ on~$\mathcal{M}$ and thus~$S$ is a torus, the holonomy is Abelian and is virtually contained in a one-parameter group. The idea is again to produce a submersion from~$S$ to some non-compact curve. We have the following cases: 
\begin{description}
\item[Semisimple holonomy] up to conjugacy, the holonomy belongs to the group of transformations~$(x,y)\mapsto(\lambda x,\lambda y)$ and the function~$f_0:\mathcal{M}\to\mathbf{RP}^1\setminus\{1\}$ given by~$x/y$ is invariant. The level curves of this function give rise to a holonomy-invariant singular foliation~$\mathcal{G}_0$ in~$\mathcal{M}$ and hence one in~$S$. In~$\mathcal{M}$, $\mathcal{G}_0$ has singularities of Poincar\'e-Hopf index~$-1$ at~$(0,\infty)$ and~$(\infty,0)$ and is free from singularities elsewhere. Hence, these points are unattained by~$\mathcal{D}$ and~$f_0$  induces a submersion~$f:S\to \mathbf{RP}^1\setminus\{1\}$. 
\item[Orthogonal holonomy] The function~$f_0:\mathcal{M}\to\mathbf{R}$ given by~$f_0(x,y)=(1+xy)/(x-y)$  is a holonomy-invariant submersion.
\item[Unipotent holonomy] Up to conjugacy, the holonomy is~$(x,y)\mapsto (x+t,y+t)$ and the submersion~$f_0:\mathcal{M}\to\mathbf{R}$, $f_0(x,y)=(x-y)^{-1}$ is invariant. \end{description}
In all cases, $f_0$ induces a submersion~$f$ from~$S$ onto a non-compact curve.

\subsubsection{Orthogonal stabilizer}\label{ossl2} The associated model space is the half-plane~$\mathbf{H}=\{z\in\mathbf{C},\;\Im(z)>0\}$ with the action of~$\mathrm{PSL}(2,\mathbf{R})$ by fractional linear transformations. This action preserves the hyperbolic metric~$g$ together with its Levi-Civita connection~$\nabla_g$. If~$\nabla_0$ is different from~$\nabla_g$, the tensor~$\nabla_g-\nabla_0$ is not identically zero and its image generates a foliation in~$\mathbf{H}$ invariant by the action of~$\mathrm{PSL}(2,\mathbf{R})$. But this action is primitive and there is no preserved foliation: we must conclude that~$\nabla_0=\nabla_g$. 

We claim that the groups isometries of~$g$ and~$\nabla_g$ coincide. From Theorem~\ref{liecorr}, if the Killing Lie algebra of a locally homogeneous connection contains~$\mathfrak{sl}(2,\mathbf{R})$ as a proper subalgebra, this connection is flat. Hence, the Killing Lie algebras of~$g$ and~$\nabla_g$ coincide. Let~$\phi$ be an isometry of~$\nabla_g$. Up to an  isometry of~$g$ we may assume that it is orientation-preserving and that it fixes the point~$i\in\mathbf{H}$. Let~$R$ be a Killing vector field that vanishes at~$i$. The isometry~$\phi$ acts upon the Killing Lie algebra and induces an automorphism of~$\mathfrak{sl}(2,\mathbf{R})$. It fixes the element corresponding to~$R$. But any automorphism of~$\mathfrak{sl}(2,\mathbf{R})$ fixing this element is an inner automorphism corresponding to the adjoint action of~$\exp(tR)$. Hence, up to a rotation fixing~$i$, $\phi$ fixes all the vector fields in the Killing algebra and is thus the identity in~$\mathbf{H}$. The group generated by the Killing Lie algebra 
of~$\nabla_g$ is the group of orientation-preserving isometries of~$g$.

We have a developing map~$\mathcal{D}:\widetilde{S}\to \mathbf{H}$ and a holonomy morphism~$\rho:\pi_1(S)\to \mathrm{Isom}^+(g)$: the previous discussion enables us to do this directly in~$S$ and not in a finite covering. We conclude that~$\nabla$ is the Levi-Civita connection of a globally-defined hyperbolic metric on~$S$. This proves Theorem~\ref{thmmain} in this case.

\subsection{The homogeneous spaces of the orthogonal group~$\mathrm{SO}(3,\mathbf{R})$} The simply connected group with Lie algebra $\mathfrak{o}(3,\mathbf{R})$ is~$\mathrm{U}(2)$. All of its one-parameter subgroups are conjugate and contain the center, which is of order two. Hence, in this case, the model space is the sphere~$S^2$ under the action of~$\mathrm{SO}(3,\mathbf{R})$. The action preserves the Riemannian (round) metric~$g$ together with its Levi-Civita connection~$\nabla_g$. By essentially the same arguments as in~\S \ref{ossl2}, $\nabla_0=\nabla_g$ and the isometry groups of~$g$ and~$\nabla_g$ coincide. As before, we conclude that~$\nabla$ is the Levi-Civita connection of a globally-defined metric of constant positive curvature on~$S$. This proves Theorem~\ref{thmmain} in this case.

\subsection{Two-dimensional algebras} The only two-dimensional Lie algebras (up to isomorphism) are the Abelian one and the one corresponding to the affine group.
\subsubsection{Affine} The \emph{affine group} $\mathrm{Aff}^+(\mathbf{R})$ is the connected and simply connected Lie group $$\{(a,b);\; (a,b)\in\mathbf{R}^2,a > 0\}$$ with the product
$(a_1,b_1)\cdot (a_2,b_2)=(a_1a_2,a_2b_2+b_1)$. We will now deal with the case where~$S$ has a connection~$\nabla$ such that at some point~$p$, the Lie algebra of Killing vector fields has rank two and is isomorphic to the Lie algebra of~$\mathrm{Aff}^+(\mathbf{R})$.  By Proposition~\ref{constgx}, up to replacing~$S$ by a finite covering, there is a left-invariant connection~$\nabla_0$ on~$\mathrm{Aff}^+(\mathbf{R})$, a developing map~$\mathcal{D}:\widetilde{S}\to \mathrm{Aff}^+(\mathbf{R})$ and a holonomy morphism~$\rho:\pi_1(S)\to \mathrm{Aff}^+(\mathbf{R})$ satisfying~(\ref{dev-hol}) and establishing an isometry between~$\nabla$ and~$\nabla_0$.

The foliation~$\mathcal{F}_0$ on~$\mathrm{Aff}^+(\mathbf{R})$ given by the kernel of~$da$ is invariant by left translations. It induces a non-singular foliation~$\mathcal{F}$ on~$S$, which is thus a torus. In particular, $S$ has an Abelian fundamental group. The elements~$(a_1,b_1)$ and~$(a_2,b_2)$ commute if and only if~$b_2(a_1-1)=b_1(a_2-1)$. In the case where the holonomy contains an element of the form~$(a_1,b_1)$ with~$a_1\neq 1$, this element may be conjugated to~$(a_1,0)$. This last element commutes with~$(a_2,b_2)$ if~$b_2=0$. In this case, up to a conjugacy, the image of the holonomy is contained in the subgroup~$\{(a,0)\}$. The function~$f_0(a,b)=b/a$ is holonomy-invariant. In the other case, the holonomy belongs to the subgroup~$\{(1,b)\}$. The function~$f_0(a,b)=a$ is an invariant one. In both cases, there is a holonomy-invariant submersion~$f_0:\mathrm{Aff}^+(\mathbf{R})\to\mathbf{R}$ which induces a submersion~$f:S\to\mathbf{R}$, which is impossible. This proves Theorem~\ref{thmmain} in this 
case.
\subsubsection{Abelian}  This is the case where, in a neighborhood of a point~$p\in S$, there are two linearly-independent, commuting Killing vector fields of~$\nabla$. By Proposition~\ref{constgx}, there exists a finite Galois covering~$\pi:\overline{S}\to S$, there is a translation-invariant connection~$\nabla_0$ on~$\mathbf{R}^2$ and~$\overline{S}$ is endowed with a~$(\mathbf{R}^2,\mathbf{R}^2)$\nobreakdash-structure that is an isometry between~$\overline{\nabla}$ and~$\nabla_0$. Since the adjoint action of~$\mathbf{R}^2$ is trivial, the vector fields on~$\mathbf{R}^2$ are holonomy-invariant and, in consequence, there are two globally-defined, commuting vector fields on~$\overline{S}$ that are Killing vector fields of~$\overline{\nabla}$ and that are linearly independent at every point.  Thus, $\overline{S}$ is a quotient of~$\mathbf{R}^2$ and~$\overline{\nabla}$ is induced by a translation-invariant connection~$\nabla_0$ in~$\mathbf{R}^2$. The group of isometries of~$\nabla_0$ is contained in the affine 
group of~$\mathbf{R}^2$, for~$\mathrm{GL}(2,\mathbf{R})$ is the automorphism group of the Lie algebra~$\mathbf{R}^2$. Hence, $S$ is a compact quotient of~$\mathbf{R}^2$ by a group~$\Gamma$ of affine transformations acting properly. The quotient of~$\mathbf{R}^2$ under the translational part of~$\Gamma$ is the compact surface~$\overline{S}$. We must conclude that~$\overline{S}=S$, this is, $S$ is a quotient of~$\mathbf{R}^2$ and~$\nabla$ is induced by a translation-invariant connection in~$\mathbf{R}^2$.\\

This finishes the proof of~Theorem~\ref{thmmain}.


\end{document}